\newcommand{\inner}[1]{\left\langle #1 \right\rangle}
\newcommand{\norm}[1]{\left\Vert #1\right\Vert}
\newcommand{\bb}[1]{\mathbb{#1}}
\newcommand{\ca}[1]{\mathcal{#1}}
\newcommand{\tp}{^\top}
\newcommand{\prox}{{\mathrm{prox}}}
\newcommand{\xk}{{x_{k} }}
\newcommand{\xkp}{{x_{k+1} }}
\newcommand{\Gc}{\ca{G}}
\newcommand{\Pc}{\ca{P}}
\newcommand{\Hc}{\ca{H}}
\newcommand{\R}{\mathbb{R}}
\newcommand{\Rn}{\mathbb{R}^n}
\newcommand{\Rm}{\mathbb{R}^m}
\newcommand{\Rmn}{\mathbb{R}^{m\times n}}
\DeclareMathOperator*{\argmin}{arg\,min}
\theoremstyle{thmstyleone}%
\newtheorem{theo}{Theorem}
\newtheorem{prop}[theo]{Proposition}%
\newtheorem{lem}[theo]{Lemma}
\newtheorem{coro}[theo]{Corollary}
\newtheorem{defin}[theo]{Definition}
\newtheorem{assumpt}[theo]{Assumption}
\theoremstyle{thmstyletwo}%
\theoremstyle{thmstylethree}%
\begin{document}

\title[IPZOPM]{An Inexact Preconditioned Zeroth-order Proximal Method for Composite Optimization}


\author[1,2]{\fnm{Shanglin} \sur{Liu}}\email{\href{mailto:liushanglin@lsec.cc.ac.cn}{liushanglin@lsec.cc.ac.cn}}
\equalcont{These authors contributed equally to this work.}

\author*[3]{\fnm{Lei} \sur{Wang}}\email{\href{mailto:wlkings@lsec.cc.ac.cn}{wlkings@lsec.cc.ac.cn}}
\equalcont{These authors contributed equally to this work.}

\author[4]{\fnm{Nachuan} \sur{Xiao}}\email{\href{mailto:xnc@lsec.cc.ac.cn}{xnc@lsec.cc.ac.cn}}
\equalcont{These authors contributed equally to this work.}

\author[1,2]{\fnm{Xin} \sur{Liu}}\email{\href{mailto:liuxin@lsec.cc.ac.cn}{liuxin@lsec.cc.ac.cn}}

\affil[1]{\orgdiv{State Key Laboratory of Scientific and Engineering Computing}, \orgname{Academy of Mathematics and Systems Science, Chinese Academy of Sciences}, \orgaddress{\city{Beijing}, \postcode{100190}, \country{China}}}

\affil[2]{\orgdiv{School of Mathematical Sciences}, \orgname{University of Chinese Academy of Sciences}, \orgaddress{\city{Beijing}, \postcode{101408}, \country{China}}}

\affil[3]{\orgdiv{Department of Statistics}, \orgname{Pennsylvania State University}, \orgaddress{\city{University Park}, \postcode{16802}, \country{USA}}}

\affil[4]{\orgdiv{Institute of Operational Research and Analytics}, \orgname{National University of Singapore}, \orgaddress{\city{Singapore}, \postcode{117597}, \country{Singapore}}}


\abstract{In this paper, we consider the composite optimization problem, where the objective function integrates a continuously differentiable loss function with a nonsmooth regularization term. Moreover, only the function values for the differentiable part of the objective function are available. To efficiently solve this composite optimization problem, we propose a preconditioned zeroth-order proximal gradient method in which the gradients and preconditioners are estimated by finite-difference schemes based on the function values at the same trial points. We establish the global convergence and worst-case complexity for our proposed method. Numerical experiments exhibit the superiority of our developed method.}

\keywords{black-box optimization, derivative-free optimization, proximal gradient method, zeroth-order optimization}



\maketitle

\section{Introduction}\label{sec1}

Zeroth-order optimization, commonly referred to as derivative-free or black-box optimization \cite{powell2002uobyqa,powell2008developments,conn2009introduction11,zhang2012derivative,grapiglia2016derivative,larson2019derivative}, 
finds important applications across diverse domains, particularly in cases 
where either the objective function remains implicit or its gradients are impossible or prohibitively expensive to evaluate. 
These applications include circuit design \cite{ciccazzo2015derivative}, 
structured prediction \cite{taskar2005learning}, 
computational nuclear physics \cite{wild2015derivative}, 
bandit learning \cite{shamir2017optimal}, 
optimal parameter determination in material science experiments \cite{nakamura2017design}, 
neural network adversarial attacks \cite{papernot2017practical}, 
and hyper-parameter tuning \cite{snoek2012practical}.

In this paper, we aim to propose a zeroth-order proximal gradient method for the following optimization problem with a composite structure,
\begin{equation}
    \label{opt:dfo}
    \min_{x \in \Rn}\quad h(x):= f(x) + r(x). 
\end{equation}
Here, the loss function $f: \Rn \to \R$ is  continuously differentiable and possibly nonsmooth, where only the evaluations of its function values are available. 
Moreover, the regularization term $r: \Rn \to \R \cup \{\infty\}$ is an explicit convex extended real-valued function (possibly nonsmooth). 
The inclusion of the regularization term $r$ enables the explicit utilization of prior knowledge concerning the problem structure. 
Throughout this paper, we make the following assumptions on the problem \eqref{opt:dfo}.

\begin{assumpt}
    \label{asp:function}

    \mbox{}
    
    \begin{enumerate}

        \item The objective function $h: \Rn \to \R \cup \{\infty\}$ is bounded from below, namely, there exists a constant $\underline{h}$ such that $h(x) \geq \underline{h}$ for any $x \in \Rn$. 
    
        \item The loss function $f: \Rn \to \R$ is twice continuously differentiable with $L_f$-Lipschitz continuous gradients and $M_f$-Lipschitz continuous Hessian.
        
        \item The regularization term $r: \Rn \to \R \cup \{\infty\}$ is convex, proper and lower-semicontinuous. 
        
        \item The proximal mapping $\mathrm{prox}_{\lambda r} (x)$ of $r$, 
        which is defined by
        \begin{equation*}
            \mathrm{prox}_{\lambda r} (x) := \argmin_{y \in \Rn} \; r(y) + \dfrac{1}{2\lambda} \norm{y - x}^2,
        \end{equation*}
        is easy-to-compute for any $\lambda > 0$ and $x \in \bb{R}^n$. 
    \end{enumerate}
\end{assumpt}

It is important to highlight that while we assume the Lipschitz continuity of $\nabla f$ and $\nabla^2 f$, we do not presume their accessibility as per Assumption \ref{asp:function}.

\subsection{Existing Works}\label{subsec1}

Extensive research has been dedicated to zeroth-order optimization over the past decades. 
For an indepth overview of these progresses, interested readers are referred to a comprehensive survey \cite{larson2019derivative} and an open-source software PDFO \cite{ragonneau2023pdfo} for details. 
In this subsection, we provide a concise overview of a select subset of studies, focusing on those most closely related methods to the present topic. 

Given the composite structure, the classical proximal gradient method \cite{mine1981minimization} emerges as a natural choice to solve the problem \eqref{opt:dfo} via the following proximal subproblem,
\begin{equation*}
    \xkp = \argmin_{y \in \Rn} ~ \inner{\nabla f(\xk), y - \xk} + \dfrac{1}{2\eta_k} \norm{y - \xk}^2 + r(y),
\end{equation*}
where $\eta_k > 0$ is a stepsize.
However, it is evident that the computation of gradients is inherent in each iteration of the aforementioned algorithm, 
rendering it unsuitable for the zeroth-order setting considered in this paper. 
To address this issue, various existing algorithms propose different zeroth-order schemes to approximate the gradients by function values, such as the Gaussian smoothing scheme \cite{ghadimi2016mini,nesterov2017random}
and finite-difference scheme \cite{gu2018faster,huang2019faster}.

Recently, Kalogerias and Powell \cite{kalogerias2022zeroth} introduced and analyzed a zeroth-order algorithm based on the Gaussian smoothing technique, 
with a specific focus on applications in risk-averse learning. 
In their work, the regularization term $r$ is selected as an indicator function of a closed convex set. 
Balasubramanian and Ghadimi \cite{balasubramanian2022zeroth}, building on an earlier work presented in \cite{ghadimi2013stochastic}, 
explored zeroth-order methods specialized for nonconvex stochastic optimization problems. 
Their approach assumes that $r$ is an indicator function and emphasizes addressing high-dimensionality challenges while avoiding saddle points.

For the general case, Kungurtsev and Rinaldi \cite{kungurtsev2021zeroth} leveraged a double Gaussian smoothing scheme within zeroth-order proximal gradient methods to solve the nonconvex and nonsmooth problem \eqref{opt:dfo}.
This algorithm was later improved in \cite{pougkakiotis2022zeroth} by adopting a more streamlined single Gaussian smoothing scheme.
A similar approach, utilizing different zeroth-order oracles, was proposed and analyzed in \cite{cai2022zeroth} under a novel approximately sparse gradient assumption.
To enhance the convergence rate in the stochastic setting, Huang et al. \cite{huang2019faster} incorporated the variance reduction technique \cite{xiao2014proximal,defazio2014saga} into the development of zeroth-order algorithms. 
Very recently, Doikov and Grapiglia \cite{doikov2023zerothorder} developed a zeroth-order implementation of the cubic Newton method \cite{nesterov2006cubic} to improve the global complexity.
However, it is noteworthy that this algorithm approximates the full Hessian matrix with additional function-value evaluations per iteration and the resulting subproblem is not easy to solve in general.

\subsection{Motivation}\label{subsec:motivation}

In this paper, we focus on the finite-difference scheme used to estimate $\nabla f(x)$ at the trial points $\{x \pm \delta e_i: 1\leq i\leq n\}$ as follows,
\begin{equation}
\label{eq:grad}
    \Gc_{\delta} (x) := \sum_{i = 1}^n \frac{f(x + \delta e_i) - f(x - \delta e_i)}{2\delta} e_i.
\end{equation}
where $e_i \in \Rn$ denotes the $i$-th column of the $n \times n$  identity matrix $I_n$, and $\delta > 0$ is a coefficient controlling the approximation error.
As a result, typical zeroth-order proximal gradient (ZOPG) methods \cite{pougkakiotis2022zeroth} generate the next iterate by solving the following subproblem,
\begin{equation*}
    \xkp = \argmin_{y \in \Rn} ~ \inner{\Gc_{\delta} (\xk), y - \xk} + \dfrac{1}{2\eta_k} \norm{y - \xk}^2 + r(y).
\end{equation*}
It is worth mentioning that the above algorithm, albeit easy to implement, are plagued by the slow convergence rate since only the first-order information is employed. 
Consequently, it requires to evaluate the function values at a large number of trial points to reach a certain accuracy, which gives rise to high computational costs.

To address this issue, we aim to utilize the second-order information of $f$ without additional evaluations of function values.
Specifically, based on the trial points $\{x \pm \delta e_i: 1\leq i\leq n\}$, the diagonal entries of $\nabla^2 f(x)$ can be readily estimated through the following finite-difference scheme, 
\begin{equation}
\label{eq:hess}
    \Hc_{\delta} (x) := \sum_{i = 1}^n \frac{f(x + \delta e_i) + f(x - \delta e_i) - 2f(x)}{\delta^2} e_i e_i\tp,
\end{equation}
where $\delta > 0$ is a parameter for controlling the approximation error.
Therefore, to incorporate the second-order information of $f$ into our algorithm, we introduce the following update scheme for $\xkp$, 
\begin{equation}
\label{opt:prox_sub}
    \xkp \approx \mathop{\arg\min}_{y \in \Rn} ~ \inner{\Gc_{\delta_k}(\xk), y - \xk} + \dfrac{1}{2} \inner{ \left( \Hc_{\delta_k} (\xk) + \sigma_k I_n \right) (y - \xk), y - \xk} + r(y),
\end{equation}
where $\delta_k > 0$ and $\sigma_k > 0$ are two constants. 
We will discuss how to solve the above subproblem in details in the sequel.

\subsection{Contribution}

In this paper, we propose a novel zeroth-order proximal method for solving the optimization problem \eqref{opt:dfo}. 
In our approach, we employ $\Gc_{\delta}(x)$ as the estimation for the first-order information, and construct the preconditioner $\Hc_{\delta}(x)$ to capture the second-order information from the same set of trial points.
Then the iterate $\xk$ is updated by inexactly solving the proximal subproblem \eqref{opt:prox_sub}. 
To the best of our knowledge, this is the first derivative-free algorithm that approximates the diagonals of Hessian matrices for composite optimization problems.
And the global convergence is first established with a worst-case complexity.
Preliminary numerical experiments demonstrate the superior performance of our method compared to existing zeroth-order methods. 

\subsection{Organization}

The remainder of this paper is structured as follows. 
In Section \ref{sec:algorithm}, we present an inexact preconditioned zeroth-order algorithm to solve the composite optimization problem \eqref{opt:dfo}. 
Additionally, the convergence properties of the proposed algorithm are thoroughly examined in Section \ref{sec:convergence}. 
Section \ref{sec:numerical} showcases numerical experiments conducted on a range of test problems, providing insights into the performance of the proposed algorithm. 
The final section offers concluding remarks and discusses potential avenues for future developments.

\section{An Inexact Preconditioned Zeroth-order Proximal Method}
\label{sec:algorithm}

This section develops an inexact preconditioned zeroth-order proximal method for the nonconvex and nonsmooth optimization problem \eqref{opt:dfo}.

\subsection{Basic notations and preliminaries}

Throughout this paper, we consider a Euclidean space $\Rn$ endowed with an inner product $\inner{\cdot, \cdot}$ and the induced norm $\norm{x} = \inner{x, x}^{1 / 2}$.
For any function $\phi: \Rn \to \R \cup \{\infty\}$, the domain is defined as the following set 
\begin{equation*}
    \mathrm{dom}(\phi) := \{x \in \Rn: \phi(x) < \infty\}.
\end{equation*} 
We say that $\phi$ is proper if $\mathrm{dom}(\phi) \neq \emptyset$.

The subdifferential of a function $\phi$ at $x \in \mathrm{dom}(\phi)$, denoted by $\partial \phi (x)$, consists of all vectors $v \in \Rn$ satisfying 
\begin{equation*}
    \phi (y) \geq \phi (x) +\inner{v, y - x} + o(\norm{y - x}), \quad
    \text{as~} y \to x.
\end{equation*}
We set $\partial \phi (x) = \emptyset$ for all $x \notin \mathrm{dom}(\phi)$.
When $\phi$ is smooth, the subdifferential $\partial \phi(x)$ consists only of the gradient $\{\nabla \phi (x)\}$,
while for convex functions it reduces to the subdifferential in the sense of convex analysis \cite{rockafellar2015convex}.



\subsection{Stationarity condition}
\label{subsec:stationarity}


Similar to the smooth setting, the primary goal of nonsmooth nonconvex optimization is the search for stationary points.
Based on the notion of subdifferential, a point $x \in \Rn$ is called stationary for the problem \eqref{opt:dfo} if the following inclusion holds \cite{davis2019stochastic},
\begin{equation*}
    0 \in \partial h(x) = \nabla f(x) + \partial r(x).
\end{equation*}
However, it is usually difficult to monitor $\mathrm{dist} (0, \partial h(x))$ in practice to measure the progress of an algorithm.
In this subsection, we derive a surrogate measurement based on the following proximal-gradient mapping \cite{davis2019stochastic},
\begin{equation*}
    \Pc_{\gamma}(x) := \dfrac{1}{\gamma} \left(x - \mathrm{prox}_{\gamma r}(x - \gamma \nabla f(x))\right),
\end{equation*}
where $\gamma > 0$ is a constant.

\begin{lem} \label{le:prox_subdiff}
    Let Assumption \ref{asp:function} hold. 
    For any $\gamma > 0$, we denote $\hat{x} := \mathrm{prox}_{\gamma r}(x - \gamma \nabla f(x))$.
    Suppose that the point $x \in \Rn$ satisfies the following condition, 
    \begin{equation}
        \label{eq:prox_subdiff_0}
        \norm{\Pc_{\gamma}(x)} \leq \epsilon,
    \end{equation}
    where $\epsilon > 0$ is a small constant.
    Then it holds that 
    \begin{equation*}
        \mathrm{dist}(0, \partial h(\hat{x})) \leq (1 + L_f \gamma)\epsilon.
    \end{equation*} 
\end{lem}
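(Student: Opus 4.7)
The plan is to unpack the definition of $\hat{x}$ through first-order optimality, and then pass from a subgradient of the surrogate model to a subgradient of $h$ at $\hat{x}$ by adding and subtracting $\nabla f(\hat{x})$. Since $\hat{x}$ is the minimizer of $r(y) + \tfrac{1}{2\gamma}\|y - (x - \gamma \nabla f(x))\|^2$, the first step is to write out the optimality condition
\begin{equation*}
    0 \in \partial r(\hat{x}) + \tfrac{1}{\gamma}\left( \hat{x} - x + \gamma \nabla f(x) \right),
\end{equation*}
which rearranges to $\Pc_{\gamma}(x) - \nabla f(x) \in \partial r(\hat{x})$, using the definition $\Pc_{\gamma}(x) = \tfrac{1}{\gamma}(x - \hat{x})$.

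Next, I would add $\nabla f(\hat{x})$ to both sides to assemble an element of $\partial h(\hat{x}) = \nabla f(\hat{x}) + \partial r(\hat{x})$. Concretely,
\begin{equation*}
    \Pc_{\gamma}(x) + \bigl(\nabla f(\hat{x}) - \nabla f(x)\bigr) \in \partial h(\hat{x}),
\end{equation*}
so that $\mathrm{dist}(0, \partial h(\hat{x}))$ is bounded above by the norm of this particular element. The triangle inequality then gives
\begin{equation*}
    \mathrm{dist}(0, \partial h(\hat{x})) \leq \norm{\Pc_{\gamma}(x)} + \norm{\nabla f(\hat{x}) - \nabla f(x)}.
\end{equation*}

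For the second term, I would invoke the $L_f$-Lipschitz continuity of $\nabla f$ from Assumption \ref{asp:function} together with the identity $\hat{x} - x = -\gamma \Pc_{\gamma}(x)$, yielding $\norm{\nabla f(\hat{x}) - \nabla f(x)} \leq L_f \gamma \norm{\Pc_{\gamma}(x)}$. Combining this with the hypothesis $\norm{\Pc_{\gamma}(x)} \leq \epsilon$ gives the advertised bound $(1 + L_f \gamma)\epsilon$.

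I do not anticipate any real obstacle: the only non-routine ingredient is the algebraic manipulation relating the proximal-gradient mapping to a subgradient of $r$ at $\hat{x}$, and that is immediate from the definition of $\mathrm{prox}_{\gamma r}$. The rest is the triangle inequality and Lipschitz continuity. Care should simply be taken to use the subdifferential in its proper (convex-analytic) sense for $r$, which is justified by the convexity assumption on $r$, and to note that $\hat{x} \in \mathrm{dom}(r)$ automatically since the proximal operator returns a point in the effective domain.
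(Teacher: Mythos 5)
Your proposal is correct and follows essentially the same route as the paper: both extract $\Pc_{\gamma}(x) \in \nabla f(x) + \partial r(\hat{x})$ from the proximal optimality condition, then pass to $\partial h(\hat{x})$ via the triangle inequality and the $L_f$-Lipschitz bound $\norm{\nabla f(\hat{x}) - \nabla f(x)} \leq L_f \norm{\hat{x} - x} \leq L_f \gamma \epsilon$. No gaps.
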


\begin{proof}
    To begin with, the condition \eqref{eq:prox_subdiff_0} directly implies that 
    \begin{equation*}
        \norm{\hat{x} - x} \leq \gamma\epsilon.
    \end{equation*} 
    Moreover, from the definition of $\hat{x}$ and proximal mappings, we have
    \begin{equation*}
        \Pc_{\gamma}(x) \in \nabla f(x) + \partial r(\hat{x}),
    \end{equation*}
    which further infers that
    \begin{equation*}
        \mathrm{dist}(0, \nabla f(x) + \partial r(\hat{x})) \leq \epsilon.
    \end{equation*}
    Therefore, it can be straightforwardly verified that
    \begin{equation*}
        \mathrm{dist}(0, \partial h (\hat{x})) \leq \mathrm{dist}(0, \nabla f(x) + \partial r(\hat{x})) + L_f\norm{x - \hat{x}} \leq (1 + L_f \gamma)\epsilon.
    \end{equation*}
    This completes the proof. 
\end{proof}

The above lemma indicates that, if the norm $\norm{\Pc_{\gamma}(x)}$ is sufficiently small, $x$ is very close to the proximal-gradient point $\hat{x}$ that is nearly stationary for the problem \eqref{opt:dfo}.
In this sense, the quantity $\norm{\Pc_{\gamma}(x)}$ can be regarded as the measurement of the stationarity violation.
This observation motivates the following definition of $\epsilon$-stationary points of \eqref{opt:dfo}.

\begin{defin}
\label{def:stationary}
    We say that a point $x \in \Rn$ is an $\epsilon$-stationary points of \eqref{opt:dfo} if there exists a constant $\gamma_{\max} > 0$ such that $\norm{\Pc_{\gamma}(x)} \leq \epsilon$ for any $\gamma \in (0, \gamma_{\max})$.
\end{defin}


\subsection{Algorithm development}
\label{subsec:algorithm}

As mentioned in Section \ref{subsec:motivation}, the main goal of this paper is to employ the second-order information to improve the performance of zeroth-order proximal gradient methods.
However, it will incur high computational costs to construct the finite-difference approximation of the full Hessian by only using function values \cite{doikov2023zerothorder}.
Additionally, solving the resulting subproblem is often challenging, which does not admit a closed-form solution in general.
To cope with these issues, we propose an inexact preconditioned zeroth-order proximal method (IPZOPM).
Our algorithm only estimates the diagonal entries of the Hessian matrix based on the zeroth-order scheme \eqref{eq:hess}.
If the nonsmooth term $r$ possesses some favourable properties, the resulting subproblem will admit a closed-form solution.
For general cases, our algorithm takes advantage of an inexact strategy to solve the subproblem.

In our algorithm, we initiate by evaluating function values at the trial points $\{\xk \pm \delta_k e_i: i\in [n]\}$ around the current iterate $\xk$, where $\delta_k > 0$ is a constant.
Using these function values, we then compute $\Gc_{\delta_k}(\xk)$ through \eqref{eq:grad} as an approximation of $\nabla f(\xk)$.
Moreover, the diagonal of $\nabla^2 f(\xk)$ can be simultaneously estimated by \eqref{eq:hess} based on the same function values.
In other words, we are able to compute $\Hc_{\delta_k}(\xk)$ without additional evaluations of function values.

Subsequently, the approximate model of $f$ around the current iterate $\xk$ is assembled in the subproblem \eqref{opt:prox_sub} that captures a part of the second-order information.
For convenience, we denote
\begin{equation*}
    l(y; x, \delta, \sigma) := f (x) + \inner{\Gc_{\delta}(x), y - x} + \dfrac{1}{2} \inner{ \left( \Hc_{\delta} (x) + \sigma I_n \right) (y - x), y - x} + r(y).
\end{equation*}
To reduce computational overheads, the proposed algorithm only inexactly solves the subproblem \eqref{opt:prox_sub} to obtain the next iterate $\xkp$ in the following manner,
\begin{equation}
\label{eq:inexact}
    l(\xkp; \xk, \delta_k, \sigma_k) - \inf_{y \in \Rn} l(y; \xk, \delta_k, \sigma_k) \leq \varepsilon_k,
\end{equation}
where $\varepsilon_k > 0$ is the prescribed precision at iteration $k$.
It is worth mentioning that $l(y; x, \delta, \sigma)$ is a strongly convex function with respect to $y \in \Rn$. 
Employing the proximal gradient method to solve the subproblem \eqref{opt:prox_sub} ensures a linear convergence rate \cite{beck2017first}. 
Hence, it takes only $\ca{O}(\log(1 / \varepsilon_k))$ inner iterations to achieve a solution satisfying \eqref{eq:inexact}. 
Furthermore, when the proximal mapping of $r$ is further assumed to be semi-smooth, applying the semi-smooth Newton method to solve the subproblem \eqref{opt:prox_sub} guarantees the local superlinear convergence rate \cite{xiao2018regularized,li2018highly}. 
As a result, it is not difficult in practice to compute the next iterate $\xkp$ that satisfies \eqref{eq:inexact}.

Finally, it is essential to note that the nonsmooth term $r(x)$ is usually separable with respect to the entries of $x$ in many applications, namely, 
\begin{equation*}
    r(x) = \sum_{i = 1}^n r_i([x]_i),
\end{equation*}
where $[v]_i$ denotes the $i$-th entry of a vector $x$.
Given that $\Hc_{\delta_k}(\xk)$ is a diagonal matrix, the subproblem \eqref{opt:prox_sub} can be efficiently solved by the following coordinate-wise manner,
\begin{equation*}
    [\xkp]_i = \mathrm{prox}_{({1}/{\tau_{i, k}})r_i} \left([\xk]_i -  \dfrac{1}{\tau_{i, k}} [\Gc_{\delta_k}(\xk)]_i\right),
    \quad i \in [n],
\end{equation*}
where $\tau_{i, k} = [\Hc_{\delta_k}(\xk)]_{ii} + \sigma_k$
and $[A]_{ii}$ denotes the $(i, i)$-th entry of a matrix $A$.
Furthermore, if the proximal mapping of each $r_i$ has a closed-form solution, the subproblem \eqref{opt:prox_sub} can be solved exactly at a negligible cost.
This is notably the case for the $\ell_1$ regularizer $r(x) = \mu\norm{x}_1$, where $\mu > 0$ is a constant and $\norm{x}_1 := \sum_{i = 1}^n |[x]_i|$.

The whole procedure of our algorithm IPZOPM is summarized in Algorithm \ref{alg:IPZOPM}.
We can see that each iteration of IPZOPM only involves the evaluation of function values of $f$ on $2n$ points.

\begin{algorithm}
    \begin{algorithmic}[1]   
        \Require function $h = f + r$.
        \State Choose an initial guess $x_0 \in \Rn$.
        \State Set $k := 0$.
        \While{``not terminated''}
            \State Determine the parameters $\delta_k$, $\sigma_k$ and $\varepsilon_k$.
            \State Evaluate the function values on the trial points $\{\xk \pm \delta_k e_i: i\in [n]\}$. 
            \State Compute $\Gc_{\delta_k}(\xk)$ and $\Hc_{\delta_k}(\xk)$.
            \State Solve the subproblem \eqref{opt:prox_sub} inexactly such that $\xkp$ satisfies \eqref{eq:inexact}.
            \State Set $k := k + 1$.
        \EndWhile
        \State Return $\xk$.
    \end{algorithmic}  
    \caption{An inexact preconditioned zeroth-order proximal method (IPZOPM) for \eqref{opt:dfo}.}  
    \label{alg:IPZOPM}
\end{algorithm}

\section{Convergence Properties}
\label{sec:convergence}

In this section, we rigorously establish the global convergence of IPZOPM with the worst-case complexity.
To this end, we make the following mild assumptions on Algorithm \ref{alg:IPZOPM}.

\begin{assumpt}
    \label{asp:parameter}
    There exists a constant $L_H > 0$ such that $\sup_{k\geq 0} \norm{\Hc_{\delta} (\xk)} \leq L_H$. 
\end{assumpt}

We begin our proof by characterizing the errors in estimating the gradient and the diagonal of the Hessian in the following lemma. 

\begin{lem}
    \label{Le_diff_approx}
    Suppose Assumption \ref{asp:function} holds. Then it can be readily verified that
    \begin{equation*}
        \norm{\Gc_{\delta}(x) - \nabla f(x)} \leq \frac{M_f \delta^2}{2},
    \end{equation*}
    and
    \begin{equation*}
        \norm{\Hc_{\delta}(x) - \mathrm{Diag}(\nabla^2 f(x))} \leq M_f \delta. 
    \end{equation*}
    where the operator $\mathrm{Diag}(A)$ sets all the entries off the diagonal to be $0$ for a matrix $A$.
\end{lem}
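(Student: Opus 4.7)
The plan is to prove both bounds by a direct Taylor expansion of $f$ around $x$ along the coordinate direction $e_i$, exploiting the fact that the finite-difference schemes \eqref{eq:grad} and \eqref{eq:hess} only involve univariate perturbations. Since $\nabla f$ is $L_f$-Lipschitz and $\nabla^2 f$ is $M_f$-Lipschitz by Assumption \ref{asp:function}, the standard cubic Taylor remainder inequality
\[
\left| f(x+h) - f(x) - \inner{\nabla f(x), h} - \tfrac{1}{2}\inner{\nabla^2 f(x) h, h} \right| \leq \tfrac{M_f}{6}\norm{h}^3
\]
is available and will be the workhorse of both estimates.

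For the first bound, I would apply this inequality with $h = \pm \delta e_i$ to obtain two expansions of $f(x \pm \delta e_i)$. Subtracting them makes the even-order terms $f(x)$ and $\tfrac{\delta^2}{2}[\nabla^2 f(x)]_{ii}$ cancel, leaving $2\delta [\nabla f(x)]_i$ plus a cubic remainder of order $M_f \delta^3$. Dividing by $2\delta$ then yields a coordinate-wise bound of order $M_f \delta^2$ on $[\Gc_\delta(x) - \nabla f(x)]_i$, which aggregates to the claimed vector-norm estimate $\norm{\Gc_\delta(x) - \nabla f(x)} \leq \tfrac{M_f \delta^2}{2}$.

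For the second bound, I would instead add the two Taylor expansions of $f(x \pm \delta e_i)$. The odd-order gradient terms $\pm \delta [\nabla f(x)]_i$ cancel, leaving $2f(x) + \delta^2 [\nabla^2 f(x)]_{ii}$ plus a cubic remainder controlled by $M_f \delta^3$. Rearranging and dividing by $\delta^2$ bounds the $i$-th diagonal error of $\Hc_\delta(x) - \mathrm{Diag}(\nabla^2 f(x))$ by a constant times $M_f \delta$. Since both matrices are diagonal, their difference is diagonal, and the spectral norm is just the maximum absolute diagonal entry, giving $\norm{\Hc_\delta(x) - \mathrm{Diag}(\nabla^2 f(x))} \leq M_f \delta$.

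There is no real obstacle in this proof; the only thing one has to be slightly careful about is tracking which norm is being used (Euclidean norm for vectors, operator norm for the diagonal matrix) and keeping the Taylor remainder constants consistent with the stated bounds. Both estimates amount to standard central-difference error analysis, which is why the lemma is stated as something that can be ``readily verified''.
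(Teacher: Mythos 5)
Your approach is the same as the paper's: the paper's proof consists precisely of writing the cubic Taylor bound $\bigl| f(x+h) - f(x) - \inner{\nabla f(x), h} - \tfrac{1}{2}\inner{\nabla^2 f(x) h, h} \bigr| \leq \tfrac{M_f}{6}\norm{h}^3$ and substituting $h = \pm\delta e_i$ into the definitions of $\Gc_\delta$ and $\Hc_\delta$, which is exactly your subtract/add argument. Your treatment of the second bound is complete and correct: the difference $\Hc_\delta(x) - \mathrm{Diag}(\nabla^2 f(x))$ is diagonal, each diagonal entry is bounded by $\tfrac{M_f\delta}{3}$, and the operator norm of a diagonal matrix is the largest absolute entry, so the bound $M_f\delta$ follows with room to spare.

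The one place where you gloss over a real issue is the aggregation step for the gradient estimate. The coordinate-wise error you obtain is $\bigl|[\Gc_\delta(x) - \nabla f(x)]_i\bigr| \leq \tfrac{M_f\delta^2}{6}$, and summing squares over $i = 1,\dotsc,n$ gives $\norm{\Gc_\delta(x) - \nabla f(x)} \leq \tfrac{\sqrt{n}\,M_f\delta^2}{6}$, not the dimension-free $\tfrac{M_f\delta^2}{2}$ claimed in the lemma; for $n > 9$ your per-coordinate bound simply does not ``aggregate to the claimed vector-norm estimate.'' To be fair, the paper's own one-line proof (``by substituting \eqref{Eq_Le_diff_approx_0} \dots we can achieve the desired result'') is silent on exactly this point, and the standard central-difference analysis in the derivative-free literature carries the $\sqrt{n}$ factor, so the discrepancy appears to lie in the lemma's stated constant rather than in your method. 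Still, as written your proof asserts an inequality that the preceding computation does not deliver; you should either keep the $\sqrt{n}$ in the conclusion or state explicitly which norm convention would make the dimension-free bound valid.
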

\begin{proof}
    For any $x, d \in \Rn$, it follows Assumption \ref{asp:function} that 
    \begin{equation}
        \label{Eq_Le_diff_approx_0}
        f(x + d) \leq f(x) + \inner{\nabla f(x), d} + \frac{1}{2} \inner{\nabla f(x) d, d} + \frac{M_f}{6} \norm{d}^3. 
    \end{equation}
    By substituting \eqref{Eq_Le_diff_approx_0} into the expression of $\Gc_{\delta}$ and $\Hc_{\delta}$, we can achieve the desired result.  
\end{proof}

\begin{lem} \label{le:des-h}
    Suppose Assumption \ref{asp:function} holds. Then, for any $x \in \Rn$, $y \in \Rn$,  $\delta > 0$ and $\sigma > 0$, it holds that 
    \begin{equation*}
        h(y) \leq l(y; x, \delta, \sigma) + \dfrac{1}{\sigma} M_f^2 \delta^4 + \frac{1}{2} \left( \sup_{x \in \Rn} \norm{\Hc_{\delta} (x)} + L_f - \frac{1}{2} \sigma \right)\norm{y-x}^2.
    \end{equation*}
\end{lem}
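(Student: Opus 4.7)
The plan is to start from the $L_f$-smoothness descent inequality for $f$ and then progressively reshape its right-hand side until $l(y;x,\delta,\sigma)$ emerges, absorbing the gradient and Hessian approximation errors into the two explicit residuals appearing in the claimed bound. First I would write
\begin{equation*}
f(y) \leq f(x) + \langle \nabla f(x), y-x\rangle + \tfrac{L_f}{2}\|y-x\|^2
\end{equation*}
from Assumption~\ref{asp:function}(2), which is the natural starting point since the target inequality involves exactly $L_f$ in the $\|y-x\|^2$ coefficient.

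Next I would replace the true gradient by its finite-difference surrogate by splitting
\begin{equation*}
\langle \nabla f(x), y-x\rangle = \langle \Gc_\delta(x), y-x\rangle + \langle \nabla f(x)-\Gc_\delta(x), y-x\rangle,
\end{equation*}
and simultaneously add and subtract $\tfrac12\langle \Hc_\delta(x)(y-x),y-x\rangle$. After this bookkeeping, the right-hand side is $l(y;x,\delta,\sigma)-r(y)-\tfrac{\sigma}{2}\|y-x\|^2$ plus the two residuals $\langle \nabla f(x)-\Gc_\delta(x),y-x\rangle$ and $-\tfrac12\langle \Hc_\delta(x)(y-x),y-x\rangle$, which must then be controlled.

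The linear residual would be handled by Cauchy--Schwarz combined with the first estimate of Lemma~\ref{Le_diff_approx}, namely $\|\nabla f(x)-\Gc_\delta(x)\| \leq M_f\delta^2/2$, followed by Young's inequality of the form $ab \leq a^2/\sigma + \sigma b^2/4$. This yields an upper bound of $\tfrac{M_f^2\delta^4}{\sigma} + \tfrac{\sigma}{4}\|y-x\|^2$ (indeed with some room to spare compared to the stated coefficient $1/\sigma$). The curvature residual is bounded directly: $-\tfrac12\langle \Hc_\delta(x)(y-x),y-x\rangle \leq \tfrac12\|\Hc_\delta(x)\|\|y-x\|^2 \leq \tfrac12\sup_{x}\|\Hc_\delta(x)\|\|y-x\|^2$.

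Adding $r(y)$ to both sides converts $f(y)$ into $h(y)$ and makes $l(y;x,\delta,\sigma)$ appear explicitly on the right-hand side. Collecting the accumulated $\|y-x\|^2$ contributions — namely $-\sigma/2$ absorbed into $l$, $+\sigma/4$ from Young, $L_f/2$ from smoothness, and $\tfrac12\sup_x\|\Hc_\delta(x)\|$ from the curvature residual — reproduces the stated coefficient $\tfrac12(\sup_x\|\Hc_\delta(x)\|+L_f-\sigma/2)$. There is no genuine analytic obstacle in this argument; the only care point is the bookkeeping of the $\sigma$-weighted terms so that the $-\sigma/2$ inside the parenthesis comes out exactly, and the choice of Young's parameter to obtain a $\sigma/4$ factor rather than any other fraction.
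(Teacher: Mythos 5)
Your proposal is correct and follows essentially the same route as the paper: start from the $L_f$-descent inequality, swap $\nabla f(x)$ for $\Gc_\delta(x)$ using Cauchy--Schwarz together with the bound $\norm{\Gc_\delta(x)-\nabla f(x)}\leq M_f\delta^2/2$ from Lemma~\ref{Le_diff_approx}, bound the $\Hc_\delta$ quadratic term by its operator norm, and apply Young's inequality with parameter proportional to $\sigma$ to split the mixed term into $\dfrac{1}{\sigma}M_f^2\delta^4$ and $\dfrac{\sigma}{4}\norm{y-x}^2$. The only difference is cosmetic ordering of the bookkeeping, and your observation that Young actually yields the sharper constant $M_f^2\delta^4/(4\sigma)$ is consistent with the stated (looser) bound.
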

\begin{proof}
    It directly follows from Lemma \ref{Le_diff_approx} that 
    \begin{equation*}
        \begin{aligned}
        h(y) \leq{} & f(x) + \inner{y-x, \nabla f(x)} + \frac{L_f}{2} \norm{y-x}^2 + r(y)\\
        \leq{} & f(x) + \inner{y-x,  \Gc_{\delta}(x)} + \norm{y-x}\norm{\Gc_{\delta}(x) - \nabla f(x)} + \frac{L_f}{2} \norm{y-x}^2 + r(y)\\
        \leq{} & l(y; x, \delta, \sigma) + \frac{1}{2} M_f\delta^2 \norm{y-x} + \frac{1}{2} \left( \sup_{x \in \Rn}\norm{\Hc_{\delta} (x)} + L_f - \sigma \right)\norm{y-x}^2\\
        \leq{} & l(y; x, \delta, \sigma) + \dfrac{1}{\sigma} M_f^2 \delta^4 + \frac{1}{2} \left( \sup_{x \in \Rn} \norm{\Hc_{\delta} (x)} + L_f - \frac{1}{2} \sigma \right)\norm{y-x}^2 . 
        \end{aligned}
    \end{equation*}
    The proof is completed.
\end{proof}

\begin{prop}
    Suppose Assumption \ref{asp:function} holds. 
    Then, for any $x\in \Rn$ and $\delta > 0$, we have
    \begin{equation} \label{eq:des-bri}
        \inf_{y \in \Rn} l(y; x, \delta,\sigma) \leq h(x) - \frac{1}{4} \gamma \norm{\Pc_{\gamma}(x)}^2 + \frac{1}{4} \gamma M_f^2 \delta^4,
    \end{equation}
    where 
    \begin{equation*}
        \sigma \geq 2\left(\sup_{x \in \Rn} \norm{\Hc_{\delta} (x)} + L_f\right),
    \end{equation*}
    and
    \begin{equation*}
        0 < \gamma \leq \frac{1}{\sigma + \sup_{x \in \Rn} \norm{\Hc_{\delta} (x)}}.
    \end{equation*}
\end{prop}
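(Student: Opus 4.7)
The plan is to bound $\inf_y l(y; x, \delta, \sigma)$ above by evaluating $l$ at a carefully chosen test point. The natural candidate $\hat{x} := \mathrm{prox}_{\gamma r}(x - \gamma \nabla f(x))$ that defines $\Pc_\gamma(x)$ is actually \emph{not} the right choice here. Instead, I would use its inexact counterpart $\tilde{y} := \mathrm{prox}_{\gamma r}(x - \gamma \Gc_\delta(x))$, which is the proximal step of the estimated gradient that actually appears in $l$.

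First, I would use the stepsize bound to dominate the preconditioned quadratic by an isotropic one. Since $\Hc_\delta(x)$ is diagonal with operator norm at most $\sup_{z \in \Rn}\norm{\Hc_\delta(z)}$, the assumption $\gamma(\sigma + \sup_{z \in \Rn}\norm{\Hc_\delta(z)}) \leq 1$ yields $\Hc_\delta(x) + \sigma I \preceq \gamma^{-1} I$, and therefore
$$l(y; x, \delta, \sigma) \leq f(x) + \inner{\Gc_\delta(x), y-x} + \frac{1}{2\gamma}\norm{y-x}^2 + r(y) \quad \text{for every } y \in \Rn.$$
Specializing to $y = \tilde{y}$ and exploiting the first-order optimality of $\tilde{y}$, which produces a subgradient $w = -\Gc_\delta(x) - \gamma^{-1}(\tilde{y} - x) \in \partial r(\tilde{y})$, I would combine with the convexity inequality $r(x) \geq r(\tilde{y}) + \inner{w, x - \tilde{y}}$ to cancel the linear and quadratic terms in $\Gc_\delta$ and arrive at
$$l(\tilde{y}; x, \delta, \sigma) \leq h(x) - \frac{1}{2\gamma}\norm{\tilde{y} - x}^2.$$

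Second, I would lower-bound $\norm{\tilde{y} - x}^2$ in terms of $\norm{\Pc_\gamma(x)}$. The nonexpansiveness of the proximal operator combined with Lemma~\ref{Le_diff_approx} gives
$$\norm{\tilde{y} - \hat{x}} \leq \gamma\norm{\Gc_\delta(x) - \nabla f(x)} \leq \frac{\gamma M_f \delta^2}{2}.$$
Together with $\norm{\hat{x} - x} = \gamma\norm{\Pc_\gamma(x)}$, the triangle inequality, and the elementary estimate $(a+b)^2 \leq 2a^2 + 2b^2$, this implies
$$\norm{\tilde{y} - x}^2 \geq \frac{1}{2}\norm{\hat{x} - x}^2 - \norm{\hat{x} - \tilde{y}}^2 \geq \frac{\gamma^2}{2}\norm{\Pc_\gamma(x)}^2 - \frac{\gamma^2 M_f^2 \delta^4}{4}.$$
Substituting into the previous display and using $\inf_y l \leq l(\tilde{y}; x, \delta, \sigma)$ delivers $\inf_y l(y; x, \delta, \sigma) \leq h(x) - \tfrac{\gamma}{4}\norm{\Pc_\gamma(x)}^2 + \tfrac{\gamma M_f^2 \delta^4}{8}$, which is even slightly stronger than the claim.

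The main obstacle is recognizing why $\hat{x}$ fails and $\tilde{y}$ succeeds. Evaluating $l$ at $\hat{x}$ generates a cross term $\inner{\Gc_\delta(x) - \nabla f(x), \hat{x} - x}$ whose Young-type splitting necessarily leaves a residual on $\norm{\hat{x} - x}^2$; in the boundary case $\gamma(\sigma + \sup\norm{\Hc_\delta}) = 1$ the curvature contribution of $l$ is already zero, so this residual swamps the target negative term. Routing the analysis through $\tilde{y}$ removes the cross term entirely, because the gradient defining $\tilde{y}$ coincides with the one appearing in $l$; the gradient-estimation error is then paid for only through the much milder correction $\norm{\tilde{y} - \hat{x}}$ in the second step.
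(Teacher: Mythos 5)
Your proposal is correct and follows essentially the same route as the paper: majorize $l(\cdot;x,\delta,\sigma)$ by the isotropic model with stepsize $\gamma$ using the bound on $\Hc_\delta+\sigma I_n$, evaluate at $\tilde y=\mathrm{prox}_{\gamma r}(x-\gamma\Gc_\delta(x))$ (the minimizer of that model) to get the decrease $-\tfrac{1}{2\gamma}\|\tilde y-x\|^2$, and then transfer to $\|\Pc_\gamma(x)\|$ via nonexpansiveness of the proximal map and the gradient-error bound of Lemma~\ref{Le_diff_approx}. Your version even yields the marginally sharper residual $\tfrac{\gamma}{8}M_f^2\delta^4$, which of course implies the stated bound.
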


\begin{proof}
    For any $\gamma \leq (0, 1 / (\sigma + \norm{\Hc_{\delta} (x)}))$, $l(y; x, \delta, \sigma)$ is majorized by the following function,
    \begin{equation*}
        \tilde{l}(y) := f(x) + \inner{y - x, \Gc_{\delta}(x)} + r(y) + \frac{1}{2\gamma} \norm{y - x}^2.
    \end{equation*}
    Moreover, for any $z \in \mathop{\arg\min}_{y \in \Rn} \inner{y - x, \Gc_{\delta}(x)} + r(y) + \frac{1}{2\gamma} \norm{y - x}^2$, it holds that 
    \begin{equation}
        \tilde{l}(z) - \tilde{l}(x) \leq \inner{z - x, \Gc_{\delta}(x)} + r(z) - r(x) \leq - \frac{1}{2\gamma} \norm{z-x}^2,
    \end{equation}
    which implies that 
    \begin{equation*}
        \inf_{y \in \Rn} l(y; x, \delta, \sigma) \leq l(x; x, \delta, \sigma) - \frac{1}{2\gamma} \norm{x - \prox_{\gamma r}(x - \gamma \Gc_{\delta}(x))}^2.
    \end{equation*}
    Finally, it follows from Lemma \ref{Le_diff_approx} that
    \begin{equation*}
        \norm{\Pc_{\gamma} (x)} \leq \norm{x - \prox_{\gamma r}(x - \Gc_{\delta}(x))} + \frac{1}{2} \gamma M_f \delta^2.
    \end{equation*}
    Then we can conclude that 
    \begin{equation*}
        \frac{1}{\gamma} \norm{x - \prox_{\gamma r}(x - \gamma\Gc_{\delta}(x))}^2 
        \geq \frac{1}{2} \gamma \norm{\Pc_{\gamma} (x)}^2 - \frac{1}{2} \gamma M_f^2 \delta^4.
    \end{equation*}
    This completes the proof. 
\end{proof}

\begin{lem} \label{le:des-h-xk}
    Suppose Assumption \ref{asp:function} and Assumption \ref{asp:parameter} hold, and $\sigma_k \geq 2(L_H + L_f)$. 
    Then it holds that 
    \begin{equation*}
        h(\xkp) - h(\xk) \leq - \frac{1}{4} \gamma \norm{\Pc_{\gamma} (\xk)}^2 +  2 M_f^2 \delta_k^4 / \sigma_k + \varepsilon_k,
    \end{equation*}
    where $\gamma \in (0, 1 / (\sigma_k + L_H))$.
\end{lem}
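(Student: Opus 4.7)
The plan is to chain together the three preceding results (Lemma \ref{le:des-h}, the Proposition giving \eqref{eq:des-bri}, and the inexactness criterion \eqref{eq:inexact}) evaluated at $x=\xk$, $y=\xkp$, $\delta=\delta_k$, $\sigma=\sigma_k$, and then use Assumption \ref{asp:parameter} to replace $\sup_{x\in\Rn}\norm{\Hc_{\delta_k}(x)}$ by $L_H$ wherever it appears.

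First I would apply Lemma \ref{le:des-h} with the above substitutions. The hypothesis $\sigma_k\geq 2(L_H+L_f)$ combined with $\sup_{x}\norm{\Hc_{\delta_k}(x)}\leq L_H$ from Assumption \ref{asp:parameter} makes the coefficient $\sup_x\norm{\Hc_{\delta_k}(x)}+L_f-\tfrac{1}{2}\sigma_k$ nonpositive, so the quadratic term drops out and I obtain
\begin{equation*}
    h(\xkp) \;\leq\; l(\xkp;\xk,\delta_k,\sigma_k) + \tfrac{1}{\sigma_k} M_f^2 \delta_k^4.
\end{equation*}
Next, the inexactness \eqref{eq:inexact} gives $l(\xkp;\xk,\delta_k,\sigma_k)\leq \inf_{y}l(y;\xk,\delta_k,\sigma_k)+\varepsilon_k$, so I can replace $l(\xkp;\cdot)$ by the infimum plus $\varepsilon_k$.

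Then I would apply the Proposition \eqref{eq:des-bri}. Its hypotheses are $\sigma_k\geq 2(\sup_x\norm{\Hc_{\delta_k}(x)}+L_f)$, which again follows from $\sigma_k\geq 2(L_H+L_f)$, and $\gamma\in(0,1/(\sigma_k+\sup_x\norm{\Hc_{\delta_k}(x)})]$, which is implied by $\gamma\in(0,1/(\sigma_k+L_H))$. This yields
\begin{equation*}
    \inf_{y\in\Rn} l(y;\xk,\delta_k,\sigma_k) \;\leq\; h(\xk) - \tfrac{1}{4}\gamma\norm{\Pc_{\gamma}(\xk)}^2 + \tfrac{1}{4}\gamma M_f^2 \delta_k^4.
\end{equation*}
Stringing these inequalities together produces the desired descent inequality except for a cleaner error constant: the accumulated $\delta_k^4$-coefficient is $\tfrac{1}{4}\gamma M_f^2 + \tfrac{1}{\sigma_k}M_f^2$.

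The last step is just bounding this constant. Since $\gamma\leq 1/(\sigma_k+L_H)\leq 1/\sigma_k$, I have $\tfrac{1}{4}\gamma M_f^2\delta_k^4\leq \tfrac{1}{4\sigma_k}M_f^2\delta_k^4$, hence the total $\delta_k^4$ error is bounded by $\tfrac{5}{4\sigma_k}M_f^2\delta_k^4\leq 2M_f^2\delta_k^4/\sigma_k$, giving the stated inequality. There is really no hard step here; the only thing to be careful about is lining up the two different appearances of the Hessian-norm supremum, one in Lemma \ref{le:des-h} and one in the Proposition, and dominating both by $L_H$ via Assumption \ref{asp:parameter} so that the same condition $\sigma_k\geq 2(L_H+L_f)$ suffices to trigger both results simultaneously.
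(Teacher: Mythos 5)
Your proposal is correct and follows essentially the same route as the paper's own proof: it chains the Proposition \eqref{eq:des-bri}, the inexactness condition \eqref{eq:inexact}, and Lemma \ref{le:des-h} at $x=\xk$, uses $\sigma_k\geq 2(L_H+L_f)$ to drop the quadratic term, and bounds $\tfrac{1}{4}\gamma M_f^2\delta_k^4+\tfrac{1}{\sigma_k}M_f^2\delta_k^4\leq \tfrac{5}{4\sigma_k}M_f^2\delta_k^4\leq 2M_f^2\delta_k^4/\sigma_k$ exactly as the paper does. The only difference is the order in which the three ingredients are assembled, and your explicit care in dominating the Hessian-norm suprema by $L_H$ matches (indeed slightly sharpens) the paper's implicit use of Assumption \ref{asp:parameter}.
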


\begin{proof}
    Taking $x = \xk$, $\delta = \delta_k$ and $\sigma = \sigma_k$ in \eqref{eq:des-bri} directly yields that
    \begin{equation*}
        \inf_{y \in \Rn} l(y; \xk, \delta_k, \sigma_k) 
        \leq h(\xk) - \frac{1}{4} \gamma \norm{\Pc_{\gamma}(\xk)}^2 + \frac{1}{4} \gamma M_f^2 \delta_k^4,
    \end{equation*}
    which together with the inexact condition \eqref{eq:inexact} implies that
    \begin{equation*}
        l(\xkp; \xk, \delta_k, \sigma_k) 
        \leq h(\xk) - \frac{1}{4} \gamma \norm{\Pc_{\gamma}(\xk)}^2 + \frac{1}{4} \gamma M_f^2 \delta_k^4 + \varepsilon_k.
    \end{equation*}
    Then, according to Lemma \ref{le:des-h}, we have
    \begin{equation*}
    \begin{aligned}
        h(\xkp) 
        \leq {} & l(\xkp; \xk, \delta_k, \sigma_k) + \dfrac{1}{\sigma_k} M_f^2 \delta_k^4 + \frac{1}{2} \left(L_H + L_f - \frac{1}{2} \sigma_k\right) \norm{\xkp - \xk}^2 \\
        \leq {} & h(\xk) - \frac{1}{4} \gamma \norm{\Pc_{\gamma}(\xk)}^2 + \frac{1}{4} \gamma M_f^2 \delta_k^4 + \dfrac{1}{\sigma_k} M_f^2 \delta_k^4 + \varepsilon_k \\
        & + \frac{1}{2} \left(L_H + L_f - \frac{1}{2} \sigma_k\right) \norm{\xkp - \xk}^2 \\
        \leq {} & h(\xk) - \frac{1}{4} \gamma \norm{\Pc_{\gamma}(\xk)}^2 + 2 M_f^2 \delta_k^4 / \sigma_k + \varepsilon_k,
    \end{aligned}
    \end{equation*}
    where the last inequality follows from the conditions $\gamma < 1 / (\sigma_k + L_H) < 1 / \sigma_k$ and $\sigma_k \geq 2(L_H + L_f)$.
    We complete the proof.
\end{proof}

\begin{coro} \label{cor:sum}
    Let Assumption \ref{asp:function} and Assumption \ref{asp:parameter} hold. 
    Suppose there exists $\sigma_{\max} > 2(L_f + L_H)$ such that $\sigma_k \in (2(L_f + L_H), \sigma_{\max})$ for any $k \geq 0$.  
    Then we have
    \begin{equation*}
        \frac{1}{4 N} \sum_{k = 0}^{N-1} \norm{\Pc_{\gamma} (\xk)}^2 \leq \frac{h(x_0) - h(x_{N-1}) + \sum_{k = 0}^{N-1} (2 M_f^2 \delta_k^4 / \sigma_k + \varepsilon_k)}{\gamma N},
    \end{equation*}
    where $\gamma \in \left( 0, 1 / (\sigma_{\max} + L_H) \right)$.
\end{coro}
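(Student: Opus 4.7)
The plan is straightforward: the corollary is a standard telescoping consequence of Lemma \ref{le:des-h-xk}, so the work is to verify that the hypotheses of that lemma hold uniformly in $k$ and then sum.

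First I would check that the assumptions of Lemma \ref{le:des-h-xk} apply at every iteration. Since $\sigma_k > 2(L_f + L_H)$ by assumption, the condition $\sigma_k \geq 2(L_H + L_f)$ needed in Lemma \ref{le:des-h-xk} is automatic. Moreover, the condition $\gamma \in (0, 1/(\sigma_{\max} + L_H))$ combined with $\sigma_k < \sigma_{\max}$ ensures $\gamma < 1/(\sigma_k + L_H)$, so the stepsize in Lemma \ref{le:des-h-xk} is admissible for every $k$. Assumptions \ref{asp:function} and \ref{asp:parameter} are carried over directly.

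Next I would invoke Lemma \ref{le:des-h-xk} at each $k = 0, 1, \ldots, N-1$ to obtain
\begin{equation*}
    h(\xkp) - h(\xk) \leq - \frac{1}{4}\gamma \norm{\Pc_{\gamma}(\xk)}^2 + 2 M_f^2 \delta_k^4 / \sigma_k + \varepsilon_k,
\end{equation*}
and then sum over $k$. The left-hand side telescopes, yielding
\begin{equation*}
    h(x_N) - h(x_0) \leq - \frac{1}{4}\gamma \sum_{k=0}^{N-1} \norm{\Pc_{\gamma}(\xk)}^2 + \sum_{k=0}^{N-1}\bigl(2 M_f^2 \delta_k^4 / \sigma_k + \varepsilon_k\bigr).
\end{equation*}
Rearranging for the gradient-mapping sum and dividing by $\gamma N$ gives the stated bound (with $h(x_N)$ in place of $h(x_{N-1})$, up to an indexing convention the authors appear to use).

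There is no real obstacle here; the only minor care needed is checking the uniform admissibility of $\gamma$ and $\sigma_k$ so Lemma \ref{le:des-h-xk} can be applied at every iteration, which is immediate from the hypotheses $\sigma_k \in (2(L_f+L_H), \sigma_{\max})$ and $\gamma \in (0, 1/(\sigma_{\max}+L_H))$.
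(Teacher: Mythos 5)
Your proof is correct and is exactly the argument the paper intends: the paper itself omits the proof, calling the corollary ``a direct consequence of Lemma \ref{le:des-h-xk},'' and your verification of the admissibility of $\gamma$ and $\sigma_k$ followed by telescoping is that consequence spelled out. Your observation about the endpoint is also right --- summing $k=0,\dots,N-1$ naturally produces $h(x_N)$ rather than the stated $h(x_{N-1})$ (an immaterial indexing slip in the paper, since the subsequent theorem only uses the lower bound $\underline{h}$).
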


\begin{proof}
    This is a direct consequence of Lemma \ref{le:des-h-xk}, and we omit the proof here.
\end{proof}


\begin{theo}
    Let Assumption \ref{asp:function} and Assumption \ref{asp:parameter} hold. 
    Suppose there exists $\sigma_{\max} > 2(L_f + L_H)$ such that $\sigma_k \in (2(L_f + L_H), \sigma_{\max})$ for any $k \geq 0$.  
    Moreover, we assume that 
    \begin{equation*}
        L_c := \sum_{k = 0}^{\infty} (\delta_k^4 + \varepsilon_k) < \infty. 
    \end{equation*}
    Then, given any  $\epsilon > 0$, Algorithm \ref{alg:IPZOPM} will return an $\epsilon$-stationary point of \eqref{opt:dfo} (see Definition \ref{def:stationary}) with
    \begin{equation*}
        \gamma \in \left( 0, \frac{1}{\sigma_{\max} + L_H}  \right)
    \end{equation*}
    in at most
    \begin{equation*}
        N \geq \frac{4 (h(x_0) - \underline{h} + C)}{\gamma \epsilon^2}
    \end{equation*}
    iterations, where $C = (M_f^2 / (L_f + L_H) + 1) L_c > 0$ is a constant.
\end{theo}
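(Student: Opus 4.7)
The plan is to apply Corollary~\ref{cor:sum} and bound its right-hand side by quantities that do not depend on $N$. Corollary~\ref{cor:sum} already delivers the averaged inequality
\[
\frac{1}{N}\sum_{k=0}^{N-1}\norm{\Pc_\gamma(\xk)}^2 \leq \frac{4\bigl(h(x_0) - h(x_{N-1}) + \sum_{k=0}^{N-1}(2M_f^2\delta_k^4/\sigma_k + \varepsilon_k)\bigr)}{\gamma N}
\]
for any $\gamma \in (0, 1/(\sigma_{\max}+L_H))$, so all that remains is careful constant accounting.

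I would handle the numerator with three elementary bounds. First, Assumption~\ref{asp:function}(1) yields $h(x_{N-1}) \geq \underline{h}$, hence $h(x_0) - h(x_{N-1}) \leq h(x_0) - \underline{h}$. Second, the lower bound $\sigma_k > 2(L_f+L_H)$ gives $2/\sigma_k < 1/(L_f+L_H)$, whence $\sum_{k=0}^{N-1} 2M_f^2\delta_k^4/\sigma_k \leq (M_f^2/(L_f+L_H))\sum_{k=0}^{N-1}\delta_k^4$. Third, the summability hypothesis $L_c = \sum_{k=0}^{\infty}(\delta_k^4 + \varepsilon_k) < \infty$ separately forces $\sum_{k=0}^{N-1}\delta_k^4 \leq L_c$ and $\sum_{k=0}^{N-1}\varepsilon_k \leq L_c$, since both summands are nonnegative. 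Combining yields $\sum_{k=0}^{N-1}(2M_f^2\delta_k^4/\sigma_k + \varepsilon_k) \leq (M_f^2/(L_f+L_H)+1)L_c = C$, so the numerator is bounded by $h(x_0) - \underline{h} + C$.

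Lower-bounding the average by its minimum then produces $\min_{0\leq k \leq N-1}\norm{\Pc_\gamma(\xk)}^2 \leq 4(h(x_0)-\underline{h}+C)/(\gamma N)$. Setting the right-hand side equal to $\epsilon^2$ and solving for $N$ gives the stated count $N \geq 4(h(x_0)-\underline{h}+C)/(\gamma\epsilon^2)$, after which the iterate $x_{k^\star}$ achieving the minimum satisfies $\norm{\Pc_\gamma(x_{k^\star})} \leq \epsilon$. Since Corollary~\ref{cor:sum} remains valid for every $\gamma \in (0, 1/(\sigma_{\max}+L_H))$ and the underlying telescoped descent estimate from Lemma~\ref{le:des-h-xk} is $\gamma$-independent, the same $x_{k^\star}$ certifies the bound throughout this range, identifying it as an $\epsilon$-stationary point in the sense of Definition~\ref{def:stationary} with $\gamma_{\max}=1/(\sigma_{\max}+L_H)$. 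I do not anticipate any genuinely hard step; the only care required is in tracking the constants and verifying the uniformity in $\gamma$ implicit in the stationarity definition.
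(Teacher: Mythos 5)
Your proof is correct and follows essentially the same route as the paper's: bound the accumulated error term by $C$ using $\sigma_k > 2(L_f + L_H)$ together with the summability of $\delta_k^4 + \varepsilon_k$, use $h(x_{N-1}) \geq \underline{h}$, and pass from the averaged bound in Corollary~\ref{cor:sum} to the minimum over $k$. The one claim you add beyond the paper---that the same iterate $x_{k^\star}$ certifies the bound uniformly over all $\gamma \in (0, 1/(\sigma_{\max}+L_H))$---is not fully justified as stated, since the index attaining the minimum may depend on $\gamma$; however, the paper's own proof likewise only establishes the bound for each fixed $\gamma$ in this range, so this does not constitute a gap relative to the published argument.
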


\begin{proof}
    To begin with, it can be readily verified that
    \begin{equation*}
    \begin{aligned}
        \sum_{k = 0}^{N - 1} (2 M_f^2 \delta_k^4 / \sigma_k + \varepsilon_k)
        \leq {} & \sum_{k = 0}^{N - 1} \left( \frac{M_f^2 \delta_k^4}{L_f + L_H} + \varepsilon_k \right) \\
        \leq {} & \left(\frac{M_f^2}{L_f + L_H} + 1 \right) \sum_{k = 0}^{N - 1} \left( \delta_k^4 + \varepsilon_k 
        \right) \\
        \leq {} & \left(\frac{M_f^2}{L_f + L_H} + 1 \right) L_c 
        = C.
    \end{aligned}
    \end{equation*}
    According to Corollary \ref{cor:sum}, we have
    \begin{equation*}
    \begin{aligned}
        \min_{k = 0, 1, \dotsc, N - 1} \norm{\Pc_{\gamma} (\xk)}^2
        \leq {} & \frac{1}{N} \sum_{k = 0}^{N-1} \norm{\Pc_{\gamma} (\xk)}^2 \\
        \leq {} & \frac{4(h(x_0) - h(x_{N-1}) + \sum_{k = 0}^{N-1} (2 M_f^2 \delta_k^4 / \sigma_k + \varepsilon_k))}{\gamma N} \\
        \leq {} & \frac{4(h(x_0) - \underline{h} + C)}{\gamma N}.
    \end{aligned}
    \end{equation*}
    Therefore, in at most 
    \begin{equation*}
        N \geq \frac{4 (h(x_0) - \underline{h} + C)}{\gamma \epsilon^2}
    \end{equation*}
    iterations, the iterate sequence $\{\xk\}_{k = 0}^{N - 1}$ generated by Algorithm \ref{alg:IPZOPM} will satisfy the following condition,
    \begin{equation*}
        \min_{k = 0, 1, \dotsc, N - 1} \norm{\Pc_{\gamma} (\xk)} \leq \epsilon.
    \end{equation*}
    The proof is completed.
\end{proof}


The global sub-linear convergence rate in the above theorem guarantees that IPZOPM is able to find an $\epsilon$-stationary point in at most $\ca{O}\left( \epsilon^{-2} \right)$ iterations. 
Since IPZOPM only evaluates the function values of $f$ on $2n$ points per iteration, the total number of function-value evaluations required to obtain an $\epsilon$-stationary point is $\ca{O}\left( n{\epsilon^{-2}} \right)$ at the most.

\section{Numerical Experiments}
\label{sec:numerical}

Comprehensive numerical experiments are conducted in this section to evaluate the numerical performance of IPZOPM. 
We use the Python language to implement the tested algorithms. 
And the corresponding experiments are conducted on a workstation with two Intel Xeon Gold 6242R CPU processors (at $3.10\,\mbox{GHz} \times 20 \times 2$) and $510\,$GB of RAM under Ubuntu 20.04.

\subsection{Implementation details}

For our algorithm IPZOPM, the parameter $\sigma_k$ is updated by the following heuristic strategy,
\begin{equation*}
    \sigma_k = 5000 \norm{x_k - x_{k - 1}}.
\end{equation*}
And we set 
\begin{equation*}
    \delta_k = \dfrac{1}{\sqrt{k + 1}}.
\end{equation*}
In the following experiments, the nonsmooth term $r$ is always the $\ell_1$ regularizer.
Therefore, the subproblem \eqref{opt:prox_sub} has a closed-form solution as mentioned in Subsection \ref{subsec:algorithm}.
The initial guess $x_0$ is randomly generated from the standard normal distribution.
And we terminate the tested algorithms if the following condition holds,
\begin{equation*}
    \norm{h (x_k) - h (x_{k - 1})} < 10^{-3},
\end{equation*}
or the maximum iteration number 1000 is reached.

\subsection{Comparison on LASSO problems}

We begin by performing the numerical comparison between IPZOPM and ZOPG \cite{pougkakiotis2022zeroth} on the following LASSO problem \cite{tibshirani1996regression},
\begin{equation}
  \min_{x \in \Rn} \,\frac{1}{2}\norm{A x - b}^2 + \mu \norm{x}_1,
\end{equation}
where $A \in \Rmn$ and $b \in \Rm$ are given data, and $\mu > 0$ is a constant. 
The above optimization model has been used extensively in high-dimensional statistics and machine learning.
For our testing, we first generate the matrix $A \in \Rmn$ and two vectors $u \in \Rn$ and $l \in \Rm$ randomly from the standard normal distribution. Then we set $b = Au + \sqrt{0.001} \cdot l$.


Figure \ref{fig:lasso} depicts the decay of function values versus the iteration numbers for four different cases.
We can observe that IPZOPM exhibits a faster convergence rate compared to ZOPG.

\begin{figure}[H]
	\centering
	
	\subfigure[$n = 1000, m = 100$]{
		\label{subfig:lasso_1}
		\includegraphics[width=0.4\linewidth]{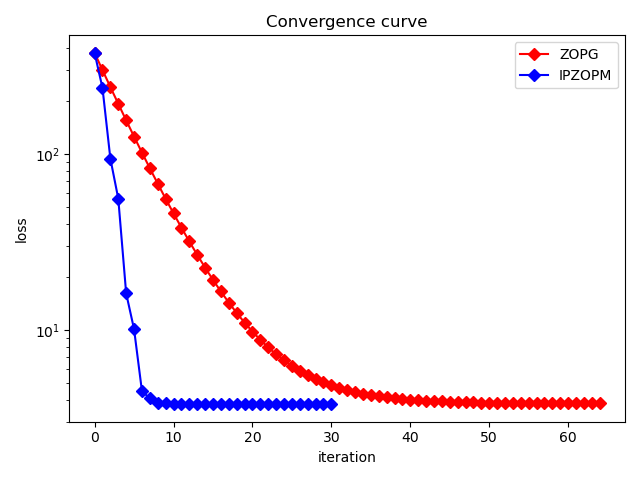}
	}
	\subfigure[$n = 2000, m = 200$]{
		\label{subfig:lasso_2}
		\includegraphics[width=0.4\linewidth]{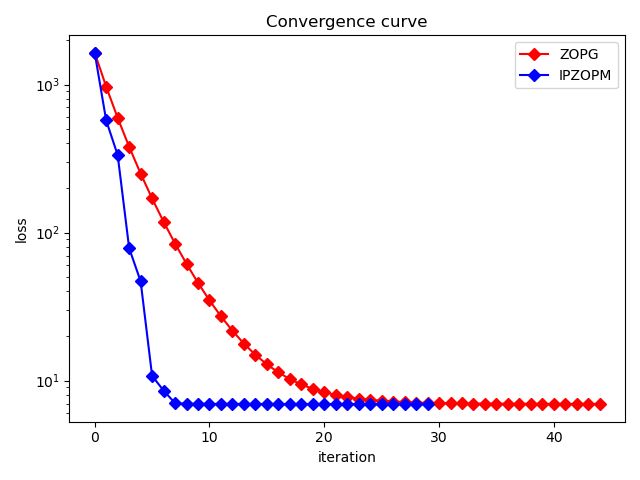}
	}

    \subfigure[$n = 3000, m = 300$]{
		\label{subfig:lasso_3}
		\includegraphics[width=0.4\linewidth]{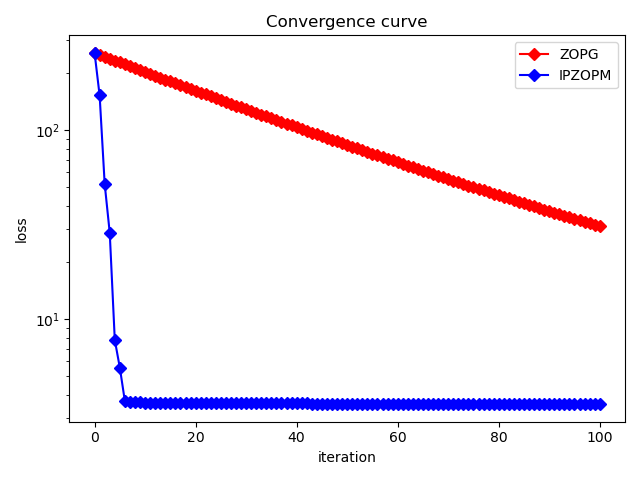}
	}
     \subfigure[$n = 4000, m = 400$]{
		\label{subfig:lasso_4}
		\includegraphics[width=0.4\linewidth]{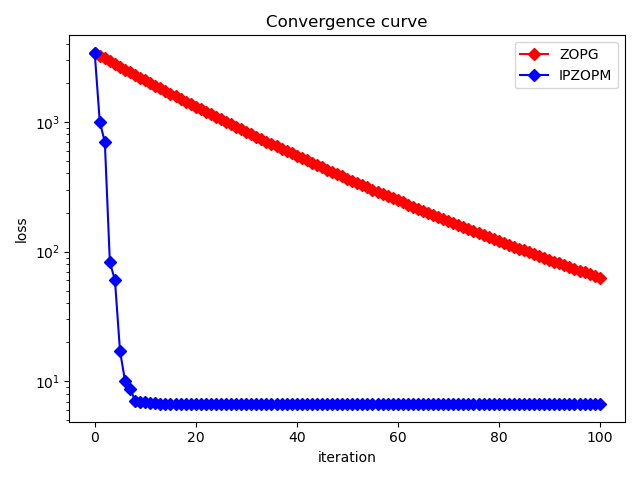}
	}
	
	\caption{Comparison between IPZOPM and ZOPG in solving LASSO problem.}
	\label{fig:lasso}
\end{figure}

\subsection{Comparison on binary classification problems}

The next experiment aims to assess the numerical performance of IPZOPM and ZOPG on the binary classification \cite{huang2019faster} problems.
Specifically, given a set of samples $\{a_i, l_i\}_{i = 1}^m$ with $a_i \in \Rn$ and $l_i \in \{-1, 1\}$, this type of problem identifies an optimal classifier $x \in \Rn$ by solving the following optimization model,
\begin{equation}
  \min_{x \in \Rn} \frac{1}{m}\sum _{i=1}^m f_i (x) + \lambda \norm{x}^2 + \mu \norm{x}_1,
\end{equation}
where $f_i(x)$ is a smooth loss function that only returns the function value given an input. 
Here, we specify the following nonconvex sigmoid loss function,
\begin{equation}
 f_i(x) = \frac{1}{1 + \exp(l_i a_i^{\top}x)},
\end{equation}
in the zeroth-order setting.
In addition, $\lambda > 0$ and $\mu > 0$ are two constants.

Four real-world datasets that are publicly available online\footnote{\url{https://www.csie.ntu.edu.tw/~cjlin/libsvmtools/datasets/binary.html}} are tested in this experiment, including a4a, a9a, w4a, and w8a.
We summarize the numbers of features $(n)$ and samples $(m)$ for each dataset in Table \ref{tb:data}.
Moreover, we fix $\lambda = \mu = 10^{-3}$.
The corresponding numerical results are shown in Figure \ref{fig:binary}.
It can be observed that IPZOPM consistently achieves a lower function value than ZOPG with the same number of iterations,
which demonstrates the superiority of our algorithm.

\begin{table}[htbp]
\caption{Real datasets tested in the binary classification problems.}
\label{tb:data}
\centering	
\begin{tabular}{c|c|c}
	\hline
	~~~datasets~~~ & ~~~$\#$samples $(m)$~~~ & ~~~$\#$features $(n)$~~~ \\ \hline
	a4a & 4781   & 122     \\ \hline
	a9a & 32561   & 123    \\ \hline
	w4a & 7366   & 300     \\ \hline
	w8a & 49749   & 300    \\ \hline
\end{tabular}
\end{table}

\begin{figure}[H]
	\centering
	
	\subfigure[a4a]{
		\label{subfig:binary_1}
		\includegraphics[width=0.4\linewidth]{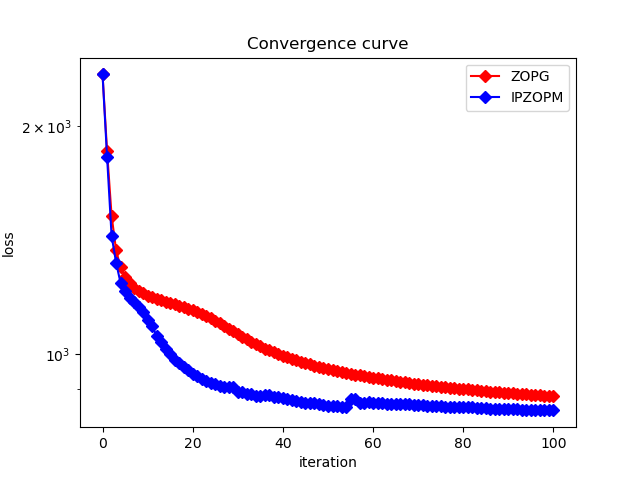}
	}
	\subfigure[a9a]{
		\label{subfig:binary_2}
		\includegraphics[width=0.4\linewidth]{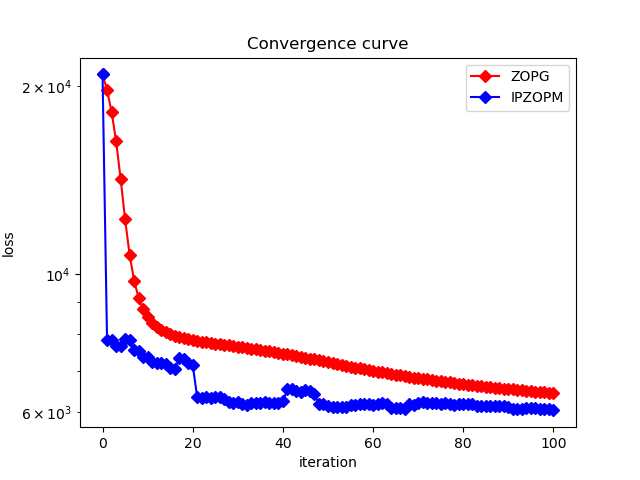}
	}

    \subfigure[w4a]{
		\label{subfig:binary_3}
		\includegraphics[width=0.4\linewidth]{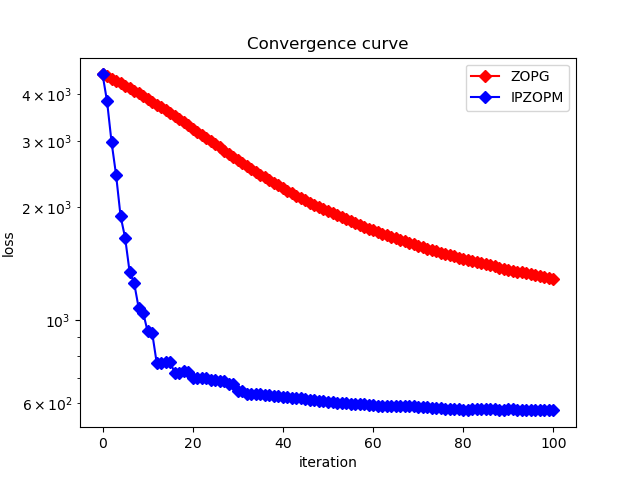}
	}
    \subfigure[w8a]{
		\label{subfig:binary_4}
		\includegraphics[width=0.4\linewidth]{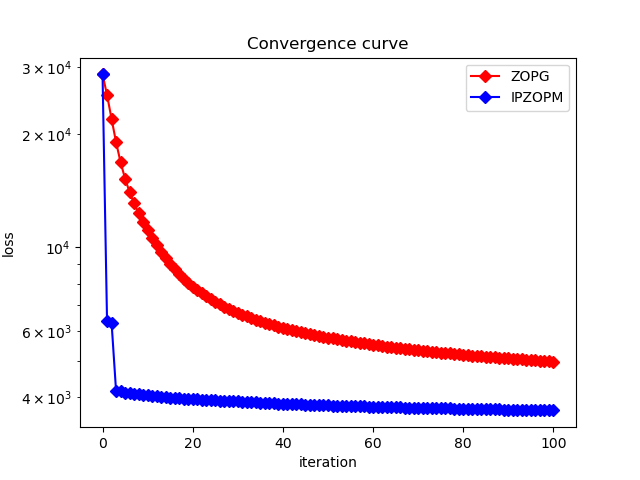}
	}
	
	\caption{Comparison between IPZOPM and ZOPG in solving binary classification problems.}
	\label{fig:binary}
\end{figure}

\section{Conclusion}
\label{sec:conclusion}

In this paper, we propose a novel method named IPZOPM to solve the composite optimization problem \eqref{opt:dfo} under the zeroth-order setting.
The proposed method estimates the diagonal of the Hessian matrix as a preconditioner for the proximal gradient method, resulting in a significant improvement in its performance in practical scenarios.
Most notably, this scheme does not require additional evaluations of function values per iteration.
We also establish a global convergence guarantee to stationary points and a worst-case complexity $\ca{O} (n\epsilon^{-2})$ under mild conditions.
Numerical experiments illustrate the promising potential of IPZOPM in large-scale applications.

Finally, we mention two related topics worthy of future studies. 
One is the possibility of developing zeroth-order quasi-Newton approaches to further reduce the per-iteration complexity. 
Another is to extend the framework of IPZOPM to Riemannian manifolds so that a wider range of applications, such as sparse PCA \cite{xiao2021exact,wang2023communication}, can benefit from our strategy.

\bmhead{Acknowledgement}

The work of Xin Liu was supported in part by the National Natural Science Foundation of China (12125108, 12226008, 12021001, 12288201, 11991021), Key Research Program of Frontier Sciences, Chinese Academy of Sciences (ZDBS-LY-7022), and CAS AMSS-PolyU Joint Laboratory of Applied Mathematics.

\backmatter

\bibliography{sn-bibliography}

\end{document}